\newtheorem{thm}{Theorem}
\newtheorem{lem}{Lemma}
\newtheorem{definition}{Definition}
\newtheorem{comment}{Comment}
\newtheorem{remark}{Remark}
\newtheorem{cj}{Conjecture}
\newcommand{\blind}{0}
\begin{document}

\def\spacingset#1{\renewcommand{\baselinestretch}%
{#1}\small\normalsize} \spacingset{1}


\if0\blind
{\title{\bf The Optimality of a Nested Generalized Pairwise Group Testing Procedure}

\author{Yaakov Malinovsky
\thanks{Department of Mathematics and Statistics, University of Maryland,
Baltimore County, Baltimore, MD 21250, USA.
Email: {\tt yaakovm@umbc.edu}.  Research supported in part
by BSF grant 2020063.}
\and
Viktor Skorniakov
\thanks{Institute of Applied Mathematics, Faculty of Mathematics and Informatics,
Vilnius University, Naugarduko 24, Vilnius LT-03225, Lithuania.
Email: {\tt e-mail: viktor.skorniakov@mif.vu.lt}.  }
}
 \maketitle
} \fi
\if1\blind
{
  \bigskip
  \bigskip
  \bigskip
  \begin{center}
    {\LARGE\bf }
\end{center}
  \medskip
} \fi

\bigskip
\begin{abstract}
We study the problem of identifying defective units in a finite population of \( n \) units, where each unit \( i \) is independently defective with known probability \( p_i \). This setting is referred to as the \emph{Generalized Group Testing Problem}. A testing procedure is called optimal if it minimizes the expected number of tests.
It has been conjectured that, when all probabilities \( p_i \) lie within the interval \( \left[1 - \frac{1}{\sqrt{2}},\, \frac{3 - \sqrt{5}}{2} \right] \), the \emph{generalized pairwise testing {algorithm}}, applied to the \( p_i \) arranged in nondecreasing order, constitutes the optimal nested testing strategy among all such order-preserving nested strategies.
In this work, we confirm this conjecture and establish the optimality of the procedure within the specified regime. Additionally, we provide a complete structural characterization of the procedure and derive a closed-form expression for its expected number of tests. These results offer new insights into the theory of optimal nested strategies in generalized group testing.
\end{abstract}

\noindent%
{\it Keywords: Individual testing; group testing; search trees; alphabetic trees; Hu–Tucker algorithm; Huffman algorithm }
\medskip

\noindent
{\it AMS Subject Classification:} 68P30, 68Q25, 90C27, 94A15, 05C85 \vfill


\newpage
\spacingset{1.45} 
\section{Introduction }

\subsection{Overview}
The concept of group testing was introduced by Robert Dorfman in 1943 in the context of screening large numbers of U.S. Army recruits for syphilis during World War II \citep{D1943}.
His idea was to reduce the expected number of tests by pooling samples and performing a single combined test—a strategy that proved to be both elegant and efficient.
Historical details surrounding the formulation of the problem are nicely described in \cite{Dh1999}. Since then, group testing has developed into a substantial area of research with both theoretical depth and practical
relevance. In particular, the COVID-19 pandemic brought renewed attention to group testing methods as tools for large-scale, efficient population screening. A comprehensive overview of group testing methodologies under various assumptions and across diverse applications can be found, for example, in \cite{K1973, Dh1999, AJS2019, MA2019}.

In general, group testing models fall into two broad categories: probabilistic and combinatorial. Each of these frameworks can be studied in adaptive or non-adaptive settings,
depending on whether tests are designed sequentially or in advance. Additionally, both settings may allow for error-free or noisy testing, leading to a rich family of models.

The classical model introduced by Dorfman is probabilistic, error-free, and adaptive. This is also the framework we adopt in the present work.
Other variants of group testing—including noisy, combinatorial (and their connections to probabilistic models), as well as non-adaptive frameworks—are thoroughly discussed in \cite{K1973, Dh1999, Dh2006, BM2017, AJS2019, P2021, NZ2022, N2023, HO2025}, among others.

\subsection{Homogeneous setting}
Consider a set $S$ of $n$ units, where each unit is defective with probability $p$, and good with probability $q=1-p$, independently of the others. Following standard notation in the group testing literature, such a set is referred to as a binomial set \citep{SG1959}. A group test applied to a subset of size $n^{\prime}$ is a binary test with two possible outcomes: positive or negative. The outcome is negative if all $n^{\prime}$ units are good, and positive if at least one unit in the group is defective. We refer to such a subset as defective or contaminated.

Dorfman introduced the procedure by partitioning a large population into subsets of equal size and testing each subset as a whole. If the test outcome for a given subset $S^{\prime}$
is negative, all units within $S^{\prime}$
are declared to be good. Otherwise, each unit in
$S^{\prime}$ is subsequently tested individually.

The goal is the complete identification of all $n$ units with the minimum expected number of tests. To date, an optimal group testing procedure under the binomial model is unknown for general $n$ when $\displaystyle p<(3-\sqrt{5})/2$.
For $\displaystyle p\geq(3-\sqrt{5})/2$ (or equivalently, $3-q-q^2\geq 2$, with $q=1-p$), \cite{U1960} showed that individual (one-by-one) testing is optimal. At the boundary point, this procedure is an optimal, since in this case, individual testing and pairwise testing (defined below) have the same expected number of tests \citep{YH1990}.

Every reasonable group testing algorithm should satisfy the following two properties \citep{SG1959, U1960}: (1) units that are classified as positive or negative are never tested again, and (2) a test is not performed if its outcome can be inferred from previous test results. The Dorfman procedure does not satisfy this property, as it still tests the final individual even when all others in the group have tested negative and the group test is positive. In contrast, the modified Dorfman procedure avoids this redundancy by not testing the last individual in such cases \citep{SG1959}.

A nested algorithm is a group testing procedure with the property that, once a positive subset $I$ is identified, the next subset $I_1$
to be tested is always a proper subset of $I$; that is, $I_1\subset I.$ This natural class of group testing procedures was first established  by \cite{SG1959} and further developed in \cite{S1960}.
The first optimal nested procedure was proposed in \cite{SG1959}, employing backward recursion (dynamic programming) with computational complexity of order $n^3$.
Clearly, both the Dorfman procedure and the modified Dorfman procedure belong to the class of nested group testing procedures.
Substantial efforts have since been devoted to reducing this computational complexity, resulting in numerous publications. A review of these developments can be found in \cite{MA2019, MA2021}.

The pairwise nested algorithm, a member of the class of nested procedures, was introduced by \cite{YH1990}. It is defined as follows:
\smallskip

{\it
\begin{itemize}
\item[(i)]
If no contaminated set exists, then always test a pair from the binomial set
unless only one unit is left, in which case we test that unit.
\item[(ii)]
If a contaminated pair is found, test one unit of that pair. If that unit is
good, we deduce the other is defective. Thus we classify both units and only a
binomial set remains to be classified. If the tested unit is defective, then by a result
of \cite{SG1959}, the other unit together with the remaining binomial set
forms a new binomial set. So, both cases reduce to a binomial set.
It is easily verified that at all times the unclassified units belong to either a
binomial set or, a contaminated pair. Thus the pairwise testing algorithm is well
defined and is nested.
\end{itemize}
}
The following theorem provides a closed-form characterization of the optimal nested procedure, requiring no computational effort for its design.
\begin{thm}{\bf \cite{YH1990}}\\
\label{re:YH}
The pairwise testing algorithm is the unique (up to the substitution of
equivalent units) optimal nested algorithm for all $n$ if and only if $1-1/\sqrt{2}\leq p \leq (3-\sqrt{5})/2$ (at the boundary values the pairwise testing algorithm is an optimal
nested algorithm).
\end{thm}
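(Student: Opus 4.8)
The plan is to reduce the problem to a one-dimensional dynamic program over binomial sets and then verify, by strong induction on $n$, that testing a pair is a minimizer of the associated Bellman equation exactly in the stated range of $p$. Write $q=1-p$ and let $H(n)$ denote the minimum expected number of tests to classify a binomial set of size $n$ among nested procedures, with $H(0)=0$. A nested procedure begins by testing a group of size $k\in\{1,\dots,n\}$: a negative outcome (probability $q^{k}$) removes $k$ good units and leaves a binomial set of size $n-k$, while a positive outcome (probability $1-q^{k}$) produces a contaminated set of size $k$ alongside the untouched binomial remainder of size $n-k$. The engine of the reduction is the reversion property of Sobel and Groll already invoked in item (ii) of the pairwise description: once a defective-containing subgroup of a contaminated set is located, the complementary units revert to an ordinary binomial set, because the sole information in a contaminated set, namely ``at least one defective,'' is then consumed. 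This collapses every reachable state to a binomial set possibly carrying a single contaminated set, giving $H(n)=\min_{1\le k\le n}\bigl[\,1+q^{k}H(n-k)+(1-q^{k})\,G(k,n-k)\,\bigr]$, where $G(m,b)$ is the optimal cost of resolving a contaminated set of size $m$ carried alongside a binomial set of size $b$. Applying the reversion property to a contaminated pair gives $G(2,b)=1+\frac{q}{1+q}H(b)+\frac{1}{1+q}H(b+1)$, and substituting $k=2$ collapses the algebra (using $q^{2}+pq=q$) to the three-term recursion $H_{PW}(n)=(2-q^{2})+q\,H_{PW}(n-2)+p\,H_{PW}(n-1)$ with $H_{PW}(1)=1$; since the pairwise algorithm only ever creates contaminated pairs, this fully describes it, and its increments $\Delta(n)=H_{PW}(n)-H_{PW}(n-1)$ converge to $c=(2-q^{2})/(1+q)$.

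The core is the inductive step: assuming pairwise is optimal for all sizes below $n$, show $k=2$ attains the Bellman minimum at size $n$, which I would split into two comparisons. Comparing a pair against individual testing ($k=1$) reduces, after cancellation, to the increment inequality $q\,\Delta(n-1)\ge 1-q^{2}$; evaluating it against the limiting slope $c$ yields, after clearing denominators, exactly $q+q^{2}\ge 1$, i.e. $p\le(3-\sqrt5)/2$, which is therefore the source of the upper endpoint. Comparing a pair against any larger group ($k\ge 3$) is where the lower endpoint $p\ge 1-1/\sqrt2$, equivalently $q^{2}\le 1/2$, enters, since a low defect rate (large $q^{2}$) is precisely what makes testing bigger pools worthwhile. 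I would then dispatch uniqueness and the converse together: strict inequalities on the open interval give optimality unique up to relabeling equivalent units; equalities at the endpoints give ties—with individual testing at the upper end, consistent with Ungar's theorem, and with a larger-group strategy at the lower end—so pairwise is optimal but not unique there; and reversing each inequality for $p$ outside $[1-1/\sqrt2,(3-\sqrt5)/2]$ produces, for a suitable $n$, a strictly better procedure, establishing the ``only if'' direction.

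The main obstacle I anticipate is concentrated in making both comparisons hold uniformly in $n$ rather than only in the limit. For the pair-versus-individual step one must control the monotone convergence of the increments $\Delta(n)$ to $c$, so that $q\,\Delta(n-1)\ge 1-q^{2}$ is valid for every $n$ and not merely asymptotically; this is a delicate but self-contained estimate on the three-term recursion. The genuinely hard part is the pair-versus-large-group step: ruling out $k\ge 3$ requires a lower bound on the \emph{optimal} contaminated-set cost $G(k,\cdot)$, which forces a simultaneous induction controlling the resolution cost of contaminated sets of every size, and it is exactly in this estimate that the threshold $q^{2}=1/2$ is sharp.
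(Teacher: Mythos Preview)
The paper does not itself prove this theorem: it is quoted verbatim from \cite{YH1990} as background. What the paper \emph{does} prove is the heterogeneous generalization (Theorem~\ref{t:main}), and specializing that argument to $p_1=\cdots=p_n=p$ yields a proof of the statement you are attempting. So the relevant comparison is between your plan and the proof of Theorem~\ref{t:main}.

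At the structural level your plan matches the paper's: strong induction on $n$, ruling out a first test of size $k=1$ and of size $k\ge 3$ separately. Your treatment of $k=1$ is essentially the same as the paper's Lemma~\ref{l:GPTP_mean}(ii): both reduce to showing the increment $\Delta(n)=H_{PW}(n)-H_{PW}(n-1)<1$ for all $n$, and the paper carries this out by an explicit two-step induction (even and odd $m$ handled separately) on the closed form of $\Delta$, rather than via convergence to the limit $c$. That is a cleaner route than the monotone-convergence argument you sketch, and it delivers the uniform-in-$n$ bound directly.

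The substantive divergence, and the genuine gap in your proposal, is the $k\ge 3$ step. You propose to lower-bound the optimal contaminated-set cost $G(k,\cdot)$ by a simultaneous induction over all contaminated-set sizes; you correctly flag this as ``the genuinely hard part,'' but you do not indicate how to obtain such a bound, and in fact a direct attack on $G$ is awkward because the optimal policy inside a contaminated set is not a priori known. The paper sidesteps this entirely. It first characterizes the optimal procedure for isolating the first defective in a contaminated $k$-tuple as an optimal alphabetic tree (Lemma~\ref{l:single_defective_proc_as_OAT}), and then uses a Hu--Tucker structural fact (Lemma~\ref{l:OAT_structure}) to show that, in the stated $p$-range, this optimal tree must test unit $k-1$ individually at some stage. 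With that structural information in hand, the paper builds an explicit competitor $T'$ that first tests $\{1,\dots,k-1\}$ instead of $\{1,\dots,k\}$ and then mimics $T$, skipping the now-redundant test of unit $k-1$; a direct count shows $T'$ saves at least one test whenever unit $k-1$ or unit $k$ is the first defective and loses one test only when all $k$ units are pure, giving a net gain of $q^{k-2}(1-2q^{2})>0$ precisely when $q^{2}<1/2$. This competitor argument is what makes the lower endpoint $p=1-1/\sqrt{2}$ fall out cleanly, and it is the idea your plan is missing.
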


Moreover, \cite{YH1990} derived the expected number of tests required by the pairwise testing algorithm. More recently, \cite{CS2024} established the higher-order moments of the total number of tests under this algorithm, as well as its asymptotic limiting distribution under suitable normalization.

\subsection{Generalized Group Testing and problem description}
Consider a set of \( n \) stochastically independent units \( u_1, u_2, \ldots, u_n \), where each unit \( u_i \) has a known probability \( p_i \in (0,1) \) of being defective, and a corresponding probability \( q_i = 1 - p_i \) of being good.
The objective is to determine the status (good or defective) of every unit using group testing. The order in which units are tested may be strategically chosen based on the known defectivity probabilities. The goal is to design a testing procedure that classifies all units as either good or defective while minimizing the total expected number of tests.

The generalized group testing problem was first introduced by \citet[pg.~144]{S1960}. In this work, the author considered scenarios where two (or more) different types of units, each with potentially different probabilities of being good, may be grouped together for testing. Specifically, in the case of two units with known probabilities $q_1\geq q_2$
of being good, individual testing is shown to be optimal if the inequality $3-q_1-q_1q_2>2$ holds. This condition emerges from the encoding algorithm construction originally developed by \citet{H1952} in the context of information theory, as applied to the case $n=2$ in \citet{S1960}.

However, despite more than 60 years having passed since the foundational work of \citet{S1960}, and subsequent advances in the field, the optimal solution—specifically, the optimal ordering of units that minimizes the expected number of tests—remains unknown, even for certain restricted procedures.
For Dorfman's procedure, \citet{H1975} showed that an optimal partition is an ordered partition, meaning that for any two subsets in the partition, all units in one subset have probabilities (of being good) greater than or equal to those in the other. This property allowed Hwang to find the optimal partition using a dynamic programming approach with computational complexity proportional to $n^2$.
However, for the modified Dorfman procedure (as defined above), this ordered partition property does not hold, as shown in \citet{M2017}. Moreover, a brute-force search for the best partition is infeasible, since the number of possible partitions grows exponentially with $n$, according to the Bell number.

\cite{KS1988} presented a dynamic programming algorithm with computational complexity proportional to $n^3$ for finding an optimal nested procedure, assuming a fixed order of units $u_1,\ldots,u_n$, which must be preserved throughout the testing process. Additionally, \cite{KS1988} employed the method of \cite{U1960} and extended the result of \cite{S1960} from the case $n=2$ to general $n$.
Specifically, they proved that if $3-q_1-q_1q_2>2$, where $q_1\geq\cdots \geq q_n$, then individual testing is optimal.
Closely related results were also obtained by \cite{YH1988b}.
In a recent work, \cite{SV2025} reduced the computational complexity of the dynamic programming algorithm to $O(n^2)$.

Before we proceed, we provide a few formal definitions applicable to the Generalized Group Testing Nested Procedure, in addition to the nested procedure defined earlier.

\begin{definition}[Ordered nested procedure] Let $\left(u_1,\ldots,u_n\right)$
denote a fixed ordering of the $n$ units to be tested, represented as an n-tuple. The Ordered Nested Procedure (ONP) proceeds by testing only those units that belong to the current contaminated set, while preserving the original order of the tuple. At each stage, tests are performed on subsets of the remaining contaminated units, maintaining the pre-specified sequence
$\left(u_1,\ldots,u_n\right)$ throughout the procedure.
\end{definition}

\begin{definition}[{Generalized pairwise testing {algorithm (GPTA)}}]
\label{def2}
\,
\begin{itemize}
    \item[\underline{Step 0:}] If the binomial set is empty, terminate the procedure; if it contains exactly one unit, test that unit, add it to the classified set, and terminate; otherwise, proceed to \underline{Step 1}.
    \item[\underline{Step 1:}] Select the first two units from the binomial set and test them together; if the test is negative, add both units to the classified set and return to \underline{Step 0}; if the test is positive, proceed to \underline{Step 2}.
    \item[\underline{Step 2:}] Given the contaminated tuple \((u_{i_1}, u_{i_2})\), test the unit with the smaller contamination probability $p_j$, i.e., the one corresponding to
    \[
        j = \begin{cases}
            i_1, & \text{if } p_{i_1} \leq p_{i_2}, \\
            i_2, & \text{if } p_{i_1} > p_{i_2}.
        \end{cases}
    \]
    If this unit is pure, classify the other unit as contaminated and add both to the classified set; if this unit is defective, add it to the classified set and return the other unit to the binomial set. Then, return to \underline{Step 0}.
\end{itemize}
\end{definition}
Note that the {GPTA} is the ONP when the ordering satisfies \( p_1 \leq \dots \leq p_n \). However, for other orderings, it is in general not an ONP.

The intuition behind Step 2 is clear and has been both explained and formally justified as Result 2 in \cite{M2020}. Moreover, \cite{M2020} proposed the following two conjectures, the first of which was validated through Monte Carlo simulations for population sizes up to 1,000.

\begin{cj}
\label{cj:1}
Let $u_1,u_2\ldots,u_n$ be units ordered so that $p_1\leq p_2\leq \cdots \leq p_n$, with $1-1/\sqrt{2}\leq p_i\leq (3-\sqrt{5})/2$ for all $i$.
Then, among all nested testing procedures that preserve the order $u_1,u_2\ldots,u_n$, the {GPTA} is an optimal nested procedure. It is not necessarily uniquely optimal at the boundary values of the interval.
\end{cj}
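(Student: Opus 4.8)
The plan is to reduce the problem to a two-function dynamic program over the fixed order $p_1\le\cdots\le p_n$ (equivalently $q_1\ge\cdots\ge q_n$) and then show that the Bellman minimizer is always ``test the leading pair.'' First I would observe that, because the order is preserved and the procedure is nested, the set of unclassified units is always a suffix $\{u_i,\dots,u_n\}$ that is either entirely binomial, or consists of a contaminated prefix $\{u_i,\dots,u_m\}$ (known to contain a defective) followed by a binomial tail $\{u_{m+1},\dots,u_n\}$: a positive leading group creates such a prefix, and resolving it either shortens the prefix or returns its unresolved part to the binomial suffix. Accordingly I would introduce $f(i)$, the optimal expected number of tests for the binomial suffix $\{u_i,\dots,u_n\}$, and $g(i,m)$, the optimal expected number for the contaminated-prefix configuration, with $g(i,i)=f(i+1)$ and $f(n+1)=0$. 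The optimality equation for the binomial state reads
\[
 f(i)=\min_{1\le k\le n-i+1}\Big[\,1+Q_{i,k}\,f(i+k)+(1-Q_{i,k})\,g(i,i+k-1)\,\Big],\qquad Q_{i,k}=\prod_{\ell=i}^{i+k-1}q_\ell ,
\]
together with the analogous recursion for $g(i,m)$ obtained by conditioning the leading subgroup on the event that the prefix is contaminated. By the dynamic-programming results of \cite{KS1988, SV2025} these equations characterize the optimal ONP, so the theorem is equivalent to showing that $k=2$ attains the minimum in the $f$-recursion whenever $i\le n-1$, and that the forced resolution of a contaminated pair coincides with Step~2 of the GPTA.

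Second, I would record that taking $k=2$ throughout, and noting that a contaminated pair admits only the move ``test $u_i$,'' yields the closed linear recursion
\[
 f(i)=2-q_iq_{i+1}+q_i\,f(i+2)+p_i\,f(i+1),\qquad f(n)=1 ,
\]
which is realized by the GPTA and is therefore an upper bound on the optimal value; solving it gives the advertised closed form, and the expected number of tests equals $f(1)$. Writing $d(i)=f(i)-f(i+1)$ for the marginal cost of prepending $u_i$, this recursion gives $d(i)=2-q_iq_{i+1}-q_i\,d(i+1)$, and a direct computation shows that, under the induction hypothesis that $f(i+1),f(i+2)$ are already optimal, the $k=2$ value minus the $k=1$ (individual-testing) value equals $d(i)-1$. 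Hence excluding $k=1$ is exactly the assertion $d(i)\le 1$.

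Third, I would prove the two-sided invariant $1/q_i-q_i\le d(i)\le 1$ for every $i$ by backward induction. The base case $d(n)=1$ uses $q_n\ge(\sqrt5-1)/2$, since $1/q_n-q_n\le 1$ is equivalent to $q_n^2+q_n-1\ge 0$. For the step, the upper bound $d(i)\le 1$ follows from the lower bound $d(i+1)\ge 1/q_{i+1}-q_{i+1}\ge 1/q_i-q_{i+1}$, where the last inequality is precisely where the monotonicity $q_{i+1}\le q_i$ enters; and the lower bound $d(i)\ge 1/q_i-q_i$ reduces, via $d(i+1)\le 1$ and $q_iq_{i+1}\le q_i^2$, to the scalar inequality $q_i(2-q_i^2)\ge 1$, which holds on $\big[(\sqrt5-1)/2,\,1\big]$ with equality at the left endpoint. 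This gives $d(i)\le 1$, i.e.\ $k=2$ always weakly beats $k=1$, and the equality at $q=(\sqrt5-1)/2$ is the source of the non-uniqueness at the upper end of the interval.

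The main obstacle is excluding groups of size $k\ge 3$, because the comparison of $k=2$ with $k$ involves $g(i,i+k-1)$, the cost of resolving a contaminated block of size $k\ge 3$, which the GPTA never produces. Here I would run an auxiliary induction on the block size $m-i+1$ to obtain a lower bound on $g(i,m)$ sharp enough to force $A_2\le A_k$, mirroring the $d(i)$ estimates above; the lower endpoint $p_i\ge 1-1/\sqrt2$, equivalently $q_iq_{i+1}\le q_i^2\le 1/2$, is exactly the threshold (as in the homogeneous analysis of Theorem~\ref{re:YH}) beyond which the pooling gain of a group of size $\ge 3$ no longer offsets the more expensive resolution of the larger contaminated block. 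Once both $k=1$ and $k\ge 3$ are excluded, $k=2$ is the boundary-unique minimizer of the $f$-recursion, so the only contaminated sets ever created are pairs, the resolution of each pair reduces to Step~2 of the GPTA, and the optimal ONP is forced to be the GPTA; the closed form for $f(1)$ then follows from the linear recursion, and the boundary equalities yield the stated non-uniqueness. I expect the delicate point to be making the lower bound on $g(i,m)$ simultaneously tight enough to defeat all $k\ge 3$ and compatible with the monotone, inhomogeneous probabilities $q_1\ge\cdots\ge q_n$.
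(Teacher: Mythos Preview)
Your dynamic-programming reduction and the exclusion of $k=1$ are correct and closely parallel the paper: your recursion $d(i)=2-q_iq_{i+1}-q_i\,d(i+1)$ is exactly the paper's $\Delta_{i:n}$ recursion, and your two-sided invariant $1/q_i-q_i\le d(i)\le 1$ is a tidy repackaging of Lemma~\ref{l:GPTP_mean}(ii) (the paper instead splits into even and odd $m$, but the content is the same). The state-space description---a binomial suffix, possibly preceded by a contaminated prefix---is also the right one for ordered nested procedures.

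The genuine gap is the exclusion of $k\ge 3$, which you leave as a hoped-for lower bound on $g(i,m)$ without carrying it out; as you yourself flag, making that bound sharp enough across all $k$ and all monotone $(q_i)$ is the whole difficulty. The paper does \emph{not} attempt to bound $g$ directly, and its argument is the idea you are missing. First (Lemma~\ref{l:OAT_structure}), it observes that the optimal resolution of a contaminated prefix of size $k\ge 3$ is an optimal alphabetic tree on the conditional weights $w_j\propto p_j\prod_{\ell<j}q_\ell$, and uses the Hu--Tucker merge rule together with the inequality $w_k<w_{k-2}$ (which holds under \eqref{eq:prob_bounds}) to show that in that tree the $(k-1)$-st unit is eventually tested individually and the first test involves at most $k-2$ units. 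Second (Step~1 of the proof of Theorem~\ref{t:main}), given any ONP $T$ that opens with a test on $k\ge 3$ units, the paper builds a competitor $T'$ that opens with the first $k-1$ units and thereafter mimics $T$, skipping tests whose outcomes it can infer; the alphabetic-tree structure just established guarantees that $T'$ saves at least one test whenever $u_{k-1}$ or $u_k$ is the first defective in the block, while it loses one test only when the whole $k$-group is pure, yielding
\[
\mathbb{E}[T]-\mathbb{E}[T']\;\ge\; q_1\cdots q_{k-2}\bigl(1-2q_{k-1}q_k\bigr)\;>\;0,
\]
which is precisely where $q_i<1/\sqrt{2}$ enters. This coupling/competitor argument sidesteps any explicit computation of $g(i,m)$, and replacing your ``auxiliary induction on the block size'' by this structural step would close the proof.
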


\begin{cj}
\label{cj:2}
For every positive integer \( n \) and for every set of defect probabilities \( p_1, \ldots, p_n \) lying in the interval
\[
\bigl[\,1 - \tfrac{1}{\sqrt{2}},\, \tfrac{3 - \sqrt{5}}{2}\bigr],
\]
the {{GPTA}}, when applied to the units \( u_1,\ldots,u_n \) arranged in the (as-yet unknown) optimal order, is an optimal nested procedure.
\end{cj}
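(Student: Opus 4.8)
The plan is to prove the statement in two stages: first the order-preserving version (Conjecture~\ref{cj:1}), that among all nested procedures respecting a fixed nondecreasing order $p_1\le\cdots\le p_n$ the GPTA is optimal, and then to upgrade this to the full claim (Conjecture~\ref{cj:2}) by showing that the nondecreasing order is itself an optimal order, so that GPTA run in the optimal order realizes an overall optimal nested procedure.

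For the first stage I would use the dynamic-programming characterization of optimal ordered nested procedures from \cite{KS1988}. The key observation is that the first action of any such procedure on the ordered binomial set $\{u_1,\dots,u_n\}$ is a group test on a prefix $\{u_1,\dots,u_g\}$; conditioning on its outcome gives
\[
C(u_1,\dots,u_n)=\min_{1\le g\le n}\Big[\,1+\Big(\textstyle\prod_{i=1}^{g} q_i\Big)\,C(u_{g+1},\dots,u_n)+\Big(1-\textstyle\prod_{i=1}^{g} q_i\Big)\,R\big(u_1,\dots,u_g\mid u_{g+1},\dots,u_n\big)\Big],
\]
where $C$ is the minimal expected number of tests for a binomial set and $R$ is the cost of resolving a contaminated prefix. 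By the Sobel--Groll reduction used in Step~2 of Definition~\ref{def2}, testing the smallest-probability unit of the contaminated set returns the problem to a binomial set, so $R$ is expressible through $C$ on strictly smaller instances. Since GPTA is exactly the policy ``always take $g=2$ and test the smaller-$p$ unit of a contaminated pair,'' proving Conjecture~\ref{cj:1} reduces to showing that $g=2$ attains the minimum above in every subproblem whenever all $p_i$ lie in $\big[1-\tfrac{1}{\sqrt2},\,\tfrac{3-\sqrt5}{2}\big]$.

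I would establish this by induction on $n$. The cases $n=1,2$ are direct: for $n=2$ the pairing cost is $3-q_1-q_1q_2$ against $2$ for individual testing, so pairing is optimal exactly when $3-q_1-q_1q_2\le2$, which the upper endpoint $\tfrac{3-\sqrt5}{2}$ makes tight and which is the source of the boundary non-uniqueness. For the inductive step I would substitute the inductive values of $C$ and $R$ into the recursion and compare $g=2$ against $g=1$ (individual testing) and against $g\ge3$ (coarser pools). The comparison with $g=1$ is again governed by an inequality of the form $3-q-qq'\le2$ and is controlled by the upper endpoint, while the comparison with $g\ge3$ is precisely where the lower endpoint $1-\tfrac1{\sqrt2}$, equivalently $q^2\le\tfrac12$, is needed to rule out profitable pooling. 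Unrolling the resulting $g=2$ recursion simultaneously yields the closed-form expected number of tests announced in the abstract.

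The main obstacle I anticipate is the inductive comparison against larger pools $g\ge3$ in the heterogeneous regime: after a positive large-pool test the continuation $R$ is the resolution of a genuinely heterogeneous contaminated set, whose own optimal handling must be controlled, and the resulting inequalities are no longer symmetric in the $p_i$. Showing that pairing the two smallest-probability front units dominates every alternative, uniformly over the interval and irrespective of the remaining probabilities, is the crux, and is exactly what the nondecreasing order is designed to make possible; I expect this to require a careful exchange/monotonicity argument. The second stage---upgrading to Conjecture~\ref{cj:2}---then needs the further structural fact that some overall optimal nested procedure can be realized as an ONP for a single order, for which I would invoke the alphabetic-tree / Hu--Tucker correspondence together with an adjacent-transposition exchange argument to show that the nondecreasing order minimizes the GPTA cost and coincides with the unconstrained nested optimum.
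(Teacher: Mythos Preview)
The paper does \emph{not} prove Conjecture~\ref{cj:2}; it is explicitly left open (see Comment~1(a)). Only Conjecture~\ref{cj:1} is established, as Theorem~\ref{t:main}. So there is no ``paper's own proof'' of this statement to compare against, and your proposal must be judged on whether it would actually close the open problem.

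Your Stage~2 plan contains a fatal error. You propose to ``show that the nondecreasing order is itself an optimal order,'' but this is known to be \emph{false}: Remark~1 in the paper records that, for $p_i\in[1-\tfrac{1}{\sqrt2},\,\tfrac{3-\sqrt5}{2}]$, the nondecreasing configuration is \emph{not} optimal, citing an explicit counterexample from \cite{M2020}. Consequently, the optimal order in Conjecture~\ref{cj:2} is genuinely unknown and is \emph{not} monotone, so an adjacent-transposition argument aimed at proving monotonicity cannot succeed. Any valid attack on Conjecture~\ref{cj:2} must cope with a non-monotone optimal order, and the paper notes precisely which ingredients are missing: a formula for the expected number of tests under an arbitrary order, and a proof that the optimal nested procedure cannot start by testing a single unit in that setting.

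Regarding Stage~1, your plan is a reasonable alternative to the paper's argument, but it is only a plan: you yourself flag the comparison against $g\ge 3$ as the ``main obstacle'' and do not indicate how to resolve it. The paper's route is quite different from the direct DP comparison you outline. Instead of bounding $R$ via the Kurtz--Sidi recursion, the paper identifies the optimal resolution of a contaminated prefix with an \emph{optimal alphabetic tree} on the weights $w_i=\alpha^{-1}p_i\prod_{j<i}q_j$ (Lemma~\ref{l:single_defective_proc_as_OAT}), and then uses a structural property of the Hu--Tucker construction (Lemma~\ref{l:OAT_structure}) to show that, under \eqref{eq:prob_bounds}, this OAT necessarily tests the $(k-1)$-st unit individually. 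That structural fact is what makes the Yao--Hwang-style competing-procedure argument go through for $k\ge 3$ (Step~1 of the proof of Theorem~\ref{t:main}); the $k=1$ case is handled separately by the inequality $\Delta_{1:n}<1$ of Lemma~\ref{l:GPTP_mean}(ii). If you want to push your DP approach through, you would need a substitute for Lemma~\ref{l:OAT_structure} that controls the continuation cost $R$ after a positive test on a large prefix; the paper's OAT lemma is exactly the missing ``exchange/monotonicity argument'' you anticipate needing.
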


\noindent The remark below provides additional insights.
\begin{remark}
For $1 - \frac{1}{\sqrt{2}} \leq p_i \leq \frac{3 - \sqrt{5}}{2}$ for all $i$, the ordered configuration in Conjecture 1 is not optimal, as demonstrated by Example 1 in \cite{M2020}.
Moreover, within the range of $p_i< \frac{3 - \sqrt{5}}{2}$ for all $i$, the ordered configuration is also not optimal for the modified Dorfman and Sterrett \citep{S1957} algorithms, both of which belong to the nested class
as was demonstrated in \cite{M2017}.
\end{remark}

In the following, we establish the validity of the Conjecture \ref{cj:1} and derive a closed-form expression for the expected number of tests required by the {GPTA}. These findings contribute novel perspectives to the theoretical foundations of generalized group testing.

\section{Main Results}
In both statements below, we assume that units $u_1,u_2,\dots,u_n$ are ordered so that their defectiveness probabilities form a non-decreasing sequence:
\begin{equation}\label{e:order}
    p_1\leq p_2\leq\dots\leq p_n.
\end{equation}
\begin{lem}\label{l:GPTP_mean}
Let \( t_{i:n} = t_{i:n}(q_i, \dots, q_n) \), for \( 1 \leq i \leq n \), denote the expected number of tests required by the GPTA on the units \( u_i, \dots, u_n \) preserving ordering given by \eqref{e:order}. Then:
\begin{itemize}
    \item[(i)] \( t_{1:n} = \sum_{i=1}^n \Delta_{i:n} \), where \( \Delta_{n:n} = 1 \) and\footnote{For \( i = n-1 \), the middle sum has an empty index set and is therefore equal to \( 0 \).}
    \begin{equation*}\label{e:delta_expression}
        \Delta_{i:n} = t_{i:n} - t_{i+1:n} = 2 - q_i q_{i+1} + \sum_{j=1}^{n - i - 1} (-1)^j q_i \cdots q_{i + j - 1} \left( 2 - q_{i + j} q_{i + j + 1} \right) + (-1)^{n - i} q_i \cdots q_{n - 1},
    \end{equation*}
    for \( i = 1, \dots, n - 1 \);

    \item[(ii)] For \( n \geq 2 \) and
    \(
    p_i \in \left( 1 - \frac{1}{\sqrt{2}},\, \frac{3 - \sqrt{5}}{2} \right),\,\,\,\, i = 1, \ldots, n,
    \)
    it holds that \( \Delta_{1:n} < 1 \).
\end{itemize}
\end{lem}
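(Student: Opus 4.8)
The plan is to prove part (i) by a one-step analysis of the GPTA and then reduce part (ii) to two elementary inequalities about the $q_i$ that are powered by the ordering \eqref{e:order}.

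For part (i) I would condition on the first pooled test of Step~1 applied to $(u_i,u_{i+1})$. With probability $q_iq_{i+1}$ the pool is clean (one test, and the procedure restarts on $u_{i+2},\dots,u_n$); otherwise Step~2 tests the lower-probability unit, which under \eqref{e:order} is $u_i$. Splitting the contaminated branch into ``$u_i$ good, $u_{i+1}$ defective'' (probability $q_i(1-q_{i+1})$, leaving $u_{i+2},\dots,u_n$) and ``$u_i$ defective'' (probability $1-q_i$, which returns $u_{i+1}$ with its marginal $q_{i+1}$ intact and leaves $u_{i+1},\dots,u_n$) yields the linear recurrence
\[
 t_{i:n}=(2-q_iq_{i+1})+q_i\,t_{i+2:n}+(1-q_i)\,t_{i+1:n},\qquad t_{n:n}=1.
\]
Subtracting $t_{i+1:n}$ gives the first-order relation
\[
 \Delta_{i:n}=(2-q_iq_{i+1})-q_i\,\Delta_{i+1:n},\qquad \Delta_{n:n}=1,
\]
whose unrolling (the one routine computation) is exactly the alternating closed form in part (i), the empty-sum convention at $i=n-1$ being just its first step.

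For part (ii) I would pass to the gap $D_i:=1-\Delta_{i:n}$, so that $D_n=0$ and the recurrence reads $D_i=c_i-q_iD_{i+1}$ with $c_i:=q_i(1+q_{i+1})-1$; the target becomes $D_1>0$. Setting $\phi=(\sqrt5-1)/2$, the hypothesis $p_i<(3-\sqrt5)/2$ is exactly $q_i>\phi$, and the identity $\phi(1+\phi)=1$ gives at once the first key fact $c_i>0$. The second, decisive, key fact is that $g(q):=q(2-q^2)$ satisfies $g(q)>1$ on the relevant range, since $g(q)-1=(1-q)(q^2+q-1)$ vanishes exactly at $q=\phi$ and $q=1$ and is positive between them. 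These combine to give $c_i-q_ic_{i+1}=q_i(2-q_{i+1}q_{i+2})-1>0$, where the ordering \eqref{e:order} enters through $q_{i+1}q_{i+2}\le q_i^2$, allowing the replacement of $q_{i+1}q_{i+2}$ by $q_i^2$ and the use of $g(q_i)>1$.

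The conclusion follows by a short downward induction proving $D_i>0$ for all $i\le n-1$. The base case $D_{n-1}=c_{n-1}>0$ is immediate, and for $i\le n-2$ the fact that $D_{i+2}\ge0$ (either $D_n=0$ or positive by induction) forces $D_{i+1}\le c_{i+1}$, whence $D_i=c_i-q_iD_{i+1}\ge c_i-q_ic_{i+1}>0$. I expect the real obstacle to be structural rather than computational: the map $x\mapsto(2-q_iq_{i+1})-q_ix$ is \emph{decreasing}, so a naive induction on $\Delta_{i:n}<1$ cannot close, and one must instead control two consecutive gaps at once and exploit the monotonicity $q_1\ge\dots\ge q_n$ precisely to secure $c_i-q_ic_{i+1}>0$. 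It is worth noting that only the upper bound $p_i<(3-\sqrt5)/2$ (i.e.\ $q_i>\phi$) is actually used, since $g(q)>1$ already holds throughout $q\in(\phi,1)$.
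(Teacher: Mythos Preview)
Your proof is correct and, at its core, follows the same path as the paper: both derive the recurrence $t_{i:n}=(2-q_iq_{i+1})+q_it_{i+2:n}+(1-q_i)t_{i+1:n}$, rewrite it as $\Delta_{i:n}=(2-q_iq_{i+1})-q_i\Delta_{i+1:n}$, and for part~(ii) reduce everything to the two inequalities $q_i(1+q_{i+1})>1$ and $q_i(2-q_{i+1}q_{i+2})>1$, the second being secured via $q_{i+1}q_{i+2}\le q_i^2$ from the ordering. The organizational difference is that the paper inducts on the sequence length $m$, first extracting the auxiliary relation $\Delta_{1:m}=\Delta_{1:m-1}+(-1)^{m-2}q_1\cdots q_{m-2}(1-q_{m-1}-q_{m-1}q_m)$ from the closed form and then splitting into even/odd $m$, whereas you induct downward on the starting index $i$ directly from the first-order recurrence for $D_i=1-\Delta_{i:n}$; your packaging is a bit leaner (no auxiliary identity needed) and in fact yields $\Delta_{i:n}<1$ for \emph{every} $i$, not just $i=1$. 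Your closing observation that only the bound $q_i>\phi$ (equivalently $p_i<(3-\sqrt5)/2$) is used is also correct and slightly sharpens the paper's statement, since the factorization $q(2-q^2)-1=(1-q)(q^2+q-1)$ shows the key inequality holds on all of $(\phi,1)$, not merely on $(\phi,1/\sqrt2)$.
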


\begin{thm}\label{t:main}
Let \(u_1,u_2,\ldots,u_n\) be units ordered so that \eqref{e:order} holds and let
\(
p_i \in \bigl(1 - \frac{1}{\sqrt{2}},\, \frac{3 - \sqrt{5}}{2}\bigr)
\)
for every \(i = 1,\dots,n\).
Then the {GPTA} is the \emph{unique optimal} ONP that preserves this ordering.
\end{thm}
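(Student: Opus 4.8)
The plan is to characterize the optimal ONP through its dynamic-programming recursion and then prove, by backward induction on the starting position $i$, that the GPTA move is the unique minimizer at every reachable state. For a binomial suffix $\{u_i,\dots,u_n\}$ let $g_i$ denote the optimal ONP cost, and for a known contaminated block $\{u_i,\dots,u_j\}$ sitting in front of a binomial tail $\{u_{j+1},\dots,u_n\}$ let $f_{i,j}$ denote the optimal cost to finish classification. The order-preserving and nested constraints force two elementary recursions. On a binomial suffix one first tests a prefix group $\{u_i,\dots,u_{i+k-1}\}$ of some size $k\ge 1$, so that $g_i=\min_k\{1+Q_k\,g_{i+k}+(1-Q_k)\,f_{i,i+k-1}\}$ with $Q_k=q_i\cdots q_{i+k-1}$ and the $k=1$ branch reading $1+g_{i+1}$. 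On a contaminated block one tests a proper prefix of size $1\le m\le j-i$, and by the Sobel--Groll merging property \cite{SG1959} a positive sub-test reverts everything behind the tested prefix to the binomial tail, so that $f_{i,j}=\min_m\{1+(\cdots)f_{i+m,j}+(\cdots)f_{i,i+m-1}\}$ with conditional weights dictated by the $q_l$ and boundary $f_{i,i}=g_{i+1}$. The induction hypothesis is that for every suffix starting strictly to the right of $i$ the GPTA is the unique optimal ONP, with cost $t_{\cdot:n}$ of Lemma \ref{l:GPTP_mean}.

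First I would settle the inductive step against the two ``small'' alternatives. Testing the single unit $u_i$ costs $1+g_{i+1}=1+t_{i+1:n}$ by the induction hypothesis, whereas testing the pair $\{u_i,u_{i+1}\}$ and then resolving it by testing the smaller-probability unit $u_i$ --- optimal by Result 2 of \cite{M2020}, which also fixes both continuations as the clean suffixes $\{u_{i+1},\dots,u_n\}$ and $\{u_{i+2},\dots,u_n\}$ --- reconstructs exactly $t_{i:n}$. Hence the pair beats the single precisely by $t_{i:n}-(1+t_{i+1:n})=\Delta_{i:n}-1$, which is strictly negative by part (ii) of Lemma \ref{l:GPTP_mean} applied to the $(n-i+1)$-unit suffix. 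This is where the upper endpoint enters: $p_i<\tfrac{3-\sqrt5}{2}$ is equivalent to $q_i^2+q_i>1$, the inequality underlying $\Delta_{i:n}<1$.

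The hard part will be to rule out every initial group of size $k\ge 3$, since evaluating $V(k):=1+Q_k\,g_{i+k}+(1-Q_k)\,f_{i,i+k-1}$ requires control of the optimal resolution $f_{i,i+k-1}$ of a contaminated block of size $\ge 3$, a regime the pair-resolution argument of \cite{M2020} does not reach. The plan here is to prove an auxiliary resolution lemma --- that the optimal nested resolution of any in-range ordered contaminated block tests its units one at a time starting from the smallest probability, i.e. $m=1$ is the unique optimizer of $f_{i,j}$ --- which collapses $f_{i,j}$ to a closed-form telescoping chain, and then to show $V(2)<V(k)$ for all $k\ge 3$. I expect the decisive inequality to be supplied by the lower endpoint $p_i>1-\tfrac1{\sqrt2}$, equivalently $q_i^2<\tfrac12$: this is precisely the threshold (sharp already in the homogeneous Theorem \ref{re:YH}) below which overgrouping into triples or larger would become competitive. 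Both the resolution lemma and the $k\ge3$ comparison are carried inside the same backward induction, interleaving the $f$- and $g$-recursions, with the base cases $n=1,2$ handled directly and the homogeneous benchmark of Theorem \ref{re:YH} available as a consistency check.

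Finally, because every comparison above is \emph{strict} on the open interval $\bigl(1-\tfrac1{\sqrt2},\,\tfrac{3-\sqrt5}{2}\bigr)$ --- $\Delta_{i:n}-1<0$ strictly, the $k\ge3$ gaps strictly positive, and the smaller-unit resolution strictly preferred --- the minimizing move is unique at every state, so the optimal ONP is forced to coincide with the GPTA throughout, giving both the optimality and the uniqueness asserted in Theorem \ref{t:main}; its value equals $t_{1:n}$ by part (i) of Lemma \ref{l:GPTP_mean}. It is worth emphasizing that strictness, hence uniqueness, genuinely needs the \emph{open} interval: at the upper boundary individual testing ties the pairwise move (as noted after Theorem \ref{re:YH}), which is exactly why Conjecture \ref{cj:1} claims only optimality, not uniqueness, at the endpoints.
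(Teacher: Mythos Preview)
Your handling of the single-unit case (comparing $1+t_{i+1:n}$ against $t_{i:n}$ via $\Delta_{i:n}<1$) matches the paper's Step~2 exactly. The overall inductive architecture is also close in spirit to the paper. The problem lies in your plan for ruling out $k\ge 3$.

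Your ``auxiliary resolution lemma'' --- that the optimal ordered nested resolution of an in-range contaminated block always tests one unit at a time from the left --- is \emph{false}. Because the continuation cost $g_{L+1}$ depends only on the identity $L$ of the first defective and not on how it was found, minimizing $f_{i,j}$ is equivalent to finding the optimal alphabetic tree (OAT) for the weights $w_l=p_l\prod_{s=i}^{l-1}q_s$; your lemma asserts this OAT is always the right comb. A direct counterexample inside the open interval: take $p_1=0.30$, $p_2=\cdots=p_5=0.38$ (so $q_1=0.70$, $q_2=\cdots=q_5=0.62$). The weights are $0.300,\,0.266,\,0.165,\,0.102,\,0.063$, and the right comb (depths $1,2,3,4,4$) has cost $\approx 1.989$, whereas the tree with depths $2,2,2,3,3$ (first test $\{u_1,u_2\}$) has cost $\approx 1.959$. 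Running Hu--Tucker confirms the latter is the OAT. Since your closed-form for $f_{i,j}$ rests on the right-comb structure, it gives only an \emph{upper} bound on the true $f_{i,j}$, and showing $V(2)$ is below that upper bound does not establish $V(2)<V(k)$.

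The paper circumvents this by proving a much weaker structural fact about the OAT (Lemma~\ref{l:OAT_structure}): within the range, the last two leaves $w_{k-1},w_k$ are always siblings, so the optimal isolation procedure tests at most the first $k-2$ units initially and eventually tests $u_{k-1}$ individually. This alone is enough to run a Yao--Hwang--style comparison: if $T$ starts by testing $k\ge 3$ units, one builds a competitor $T'$ that starts with $k-1$ units and otherwise mimics $T$, saving at least one test whenever $u_{k-1}$ or $u_k$ is the first defective and losing one only when the whole $k$-block is pure; the balance $q_1\cdots q_{k-2}(1-2q_{k-1}q_k)>0$ follows from $q_l<1/\sqrt2$. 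No closed form for $f_{i,j}$ is needed.
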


Before we provide the formal proof, the following discussion—presented as comments—may help clarify the current state of knowledge.

\begin{comment}
\begin{enumerate}
\item[(a)]
Conjecture \ref{cj:2} remains open. However, the main part of our proof applies in this case as well: the only missing components are an explicit formula for the expected number of tests and a proof that the optimal procedure cannot begin by testing a single unit.
Nevertheless, the derivations provided in Section~\ref{s:proofs} imply that, under \(
p_i \in \bigl(1-\tfrac{1}{\sqrt{2}},\,\tfrac{3-\sqrt{5}}{2}\bigr)
\), the optimal nested procedure never begins with more than two units.
\item[(b)] The ONPs corresponding to monotone orderings (i.e., $p_1 \leq \dots \leq p_n$ or $p_1 \geq \dots \geq p_n$) generalize the homogeneous case where $p_i = p$ for all $i$. Consequently, our result extends that of \cite{YH1990}, who conjectured that, within a certain range, a homogeneous pairwise testing procedure may be globally optimal. Given that the average number of tests is a continuous function of the $p_i$'s, we conjecture that a similar result may hold in the non-homogeneous case. However, the domain for such optimality is likely to be quite narrow.
\end{enumerate}
\end{comment}

\section{Proofs}\label{s:proofs}
\begin{proof}[Proof of Lemma \ref{l:GPTP_mean}]
By the description of the {GPTA} (see Definition \ref{def2}), and conditioning on the state of nature of the first two units, it follows that for \( n \geq 2 \) (with the convention that \( t_{j:n} = 0 \) for \( j > n \)), we have
\begin{multline}
\label{eq:1}
    t_{1:n}=q_1q_2(1+t_{3:n})+q_1p_2(2+t_{3:n})+p_1(2+t_{2:n})=
    2-q_1q_2+p_1t_{2:n}+q_1t_{3:n}.
\end{multline}

From \eqref{eq:1}, we obtain
\begin{equation}
\label{eq:2}
    \Delta_{1:n}:=t_{1:n}-t_{2:n}=2-q_1q_2-q_1(t_{2:n}-t_{3:n}),\,n\geq 2.
\end{equation}

Set
\begin{equation}
\label{eq:def}
\Delta_{i:n}= t_{i:n} - t_{i+1:n}.
\end{equation}

Similarly to \eqref{eq:1} and \eqref{eq:2}, we obtain for \( {i \geq 1} \) and ${n\geq i+1}$,
\begin{equation}
\label{eq:3}
    \Delta_{i:n}=2-q_iq_{i+1}-q_i(t_{i+1:n}-t_{i+2:n}).
\end{equation}

Recursively appealing to~\eqref{eq:3}, and noting that \( \Delta_{n:n} = 1 \) by definition \eqref{eq:def}, we obtain for \( n \geq 2 \) and $i=1,
\ldots,n-1$,
\begin{multline}\label{e:dr}
    \Delta_{i:n}
    = 2 - q_i q_{i+1} - q_i \Delta_{i+1:n}= \dots \\[6pt]
    = 2 - q_i q_{i+1} + {\sum_{j=1}^{n - i - 1} (-1)^j q_i \cdots q_{i + j - 1} \left( 2 - q_{i + j} q_{i + j + 1} \right) + (-1)^{n - i} q_i \cdots q_{n - 1}}.
\end{multline}

This completes the proof of part~(i) by combining \eqref{eq:def} and \eqref{e:dr}.

It remains to prove (ii). Recall that we assumed that
\begin{align}
\label{eq:A1}
q_1 \geq q_2 \geq \cdots \geq q_n,
\end{align}
and
\begin{align}
\label{eq:A2}
q_i \in \left( \frac{\sqrt{5}-1}{2}, \frac{1}{\sqrt{2}} \right), \quad i = 1, \ldots, n.
\end{align}

First, observe that by \eqref{e:dr},
\begin{equation}
\label{eq:i1}
\Delta_{i:i+1} = 2 - q_i q_{i+1} - q_i < 1,\,\,\,\,i=1,\ldots,n-1,
\end{equation}
where the inequality follows from the assumption in \eqref{eq:A2}.

Also, by \eqref{e:dr},
\begin{align}
\label{eq:i2}
\Delta_{i:i+2} &= 2 - q_i (2 - q_{i+1} q_{i+2})=2 - 2q_i +q_{i}q_{i+1} q_{i+2}\nonumber\\
&
\leq 2 - 2q_i + q_i^3 < 1,\,\,\,\,i=1,\ldots,n-2,
\end{align}
where the first inequality follows from \eqref{eq:A1}, and the second follows from the fact that the function \( f(x) = 2 - 2x + x^3 \) is continuous and strictly decreasing on the interval \eqref{eq:A2}, with \( f\left( \frac{\sqrt{5}-1}{2} \right) = 1 \).

The inequalities \( \Delta_{1:2} < 1 \) and \( \Delta_{1:3} < 1 \) are particular cases of \eqref{eq:i1} and \eqref{eq:i2}, respectively.
We complete the proof by induction. From \eqref{e:dr}, we obtain
\begin{equation}
\label{eq:i3}
    \Delta_{1:m} = \Delta_{1:m-1} + q_1 \cdots q_{m-2} \, (-1)^{m-2} \left( 1 - q_{m-1} - q_{m-1} q_m \right), \quad 3 \leq m \leq n.
\end{equation}
Therefore, for even \( m \), with \( 4 \leq m \leq n \), it follows from \eqref{eq:i3} that
\begin{equation*}
    \Delta_{1:m} = \Delta_{1:m-1} + q_1 \cdots q_{m-2} \left( 1 - q_{m-1} - q_{m-1} q_m \right) < \Delta_{1:m-1} < 1,
\end{equation*}
where the first inequality follows from \eqref{eq:i1}, and the second from the induction assumption.

For odd \( m \), with \( 3 \leq m \leq n \), we obtain from \eqref{eq:i3}
\begin{equation*}
\label{eq:i4}
    \Delta_{1:m} = \Delta_{1:m-2} + q_1 \cdots q_{m-3} \left( 1 - 2q_{m-2} + q_{m-2} q_{m-1} q_m \right) < \Delta_{1:m-2} < 1,
\end{equation*}
where the first inequality follows from \eqref{eq:i2}, and the second from the induction assumption.

\end{proof}

To proceed further, we require two technical lemmas. Their proofs rely on basic properties of optimal alphabetic trees (OATs) introduced in \cite{HT1971}. A clear exposition of the construction of OATs, along with references to related literature, can be found in \cite{D1998} (see also \cite{HS2002}).

\begin{lem}\label{l:single_defective_proc_as_OAT}
Assume that the initial group $(u_1, \dots, u_n)$ is contaminated. If, at each testing stage, a nested procedure is allowed to test only subsets consisting of units from the contaminated set, preserving their order from left to right, then the optimal procedure for isolating one defective unit is given by an OAT corresponding to the sequence of weights
\[
w_1 = \frac{p_1}{\alpha}, \quad w_2 = \frac{p_2 q_1}{\alpha}, \quad \dots, \quad w_n = \frac{p_n q_1 \cdots q_{n-1}}{\alpha}, \quad \text{where } \alpha = 1 - q_1 \cdots q_n.
\]
The isolated unit is the first defective encountered in the sequence $u_1, u_2, \dots, u_n$ when inspected from left to right.
\end{lem}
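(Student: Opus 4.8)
The plan is to recast the task of isolating the leftmost defective unit as a search problem over $n$ possible outcomes and to show that any admissible procedure is encoded by an alphabetic binary tree whose expected number of tests equals the weighted external path length; optimality then follows from the very definition of an OAT. First I would enumerate the outcomes: conditioned on the event that $(u_1,\dots,u_n)$ is contaminated, the leftmost defective sits at position $i$ precisely when $u_1,\dots,u_{i-1}$ are good and $u_i$ is defective, an event of probability $p_i q_1\cdots q_{i-1}$. Dividing by $P(\text{contaminated}) = 1 - q_1\cdots q_n = \alpha$ yields exactly the conditional outcome probabilities $w_1,\dots,w_n$, which sum to one. These $n$ mutually exclusive outcomes are naturally ordered $1 < 2 < \dots < n$, and they will serve as the leaves of the tree.

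Next I would set up the structural correspondence, which is the core of the argument. At any stage of an order-preserving procedure, the set of still-possible positions of the leftmost defective is a contiguous interval $[a,b]$: we have confirmed $u_1,\dots,u_{a-1}$ good and know at least one of $u_a,\dots,u_b$ is defective. A test of the prefix $\{u_a,\dots,u_c\}$ with $a \le c < b$ returns positive exactly when the leftmost defective lies in $[a,c]$ and negative exactly when it lies in $[c+1,b]$, so such a test splits the candidate interval into the two contiguous sub-intervals $[a,c]$ and $[c+1,b]$. Conversely, because the procedure must preserve the left-to-right order, every informative test it performs partitions the current candidate interval into a lower and an upper contiguous block, i.e.\ realizes a threshold split. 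Hence the decision tree of the procedure is precisely a binary tree on the ordered leaves $1,\dots,n$ in which each internal node splits a contiguous range, which is an alphabetic tree. When the candidate interval shrinks to a singleton $\{u_k\}$, the identity of the leftmost defective is inferred and no further test is performed, so leaf $k$ contributes no extra test.

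Finally I would compute the cost. An internal node $v$ of the tree is visited exactly when the realized outcome is one of the leaves in its subtree, so the expected number of tests equals $\sum_{v}\sum_{i\in\mathrm{leaves}(v)} w_i = \sum_{i=1}^n w_i\,d_i$, where $d_i$ is the depth (number of internal ancestors) of leaf $i$; this is the weighted external path length of the alphabetic tree. Minimizing the expected number of tests is therefore identical to minimizing the weighted path length over all alphabetic trees on the weights $w_1,\dots,w_n$, whose solution is by definition the OAT of \cite{HT1971}. I expect the main obstacle to be the rigorous justification of the structural correspondence in the second step, specifically showing that restricting to prefix (threshold) tests is without loss of generality, so that no order-preserving nested procedure using more general contiguous or non-contiguous blocks can beat the best alphabetic tree, and that the ``no redundant test'' convention makes the depth-to-test-count identification exact. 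The remaining computations are routine.
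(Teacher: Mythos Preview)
Your proposal is correct and follows essentially the same route as the paper: identify the $n$ outcomes ``first defective at position $i$'' with conditional probabilities $w_i$, encode any admissible procedure as a binary tree whose leaves are these outcomes in their natural order, identify the expected number of tests with the weighted external path length, and invoke the definition of an OAT. The paper's write-up is terser (it simply declares that internal nodes represent tests on the ``leftmost portion of the remaining contaminated set'' and draws a picture), whereas you spell out the interval/threshold-split correspondence more explicitly; but the argument is the same.

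Regarding the obstacle you flag: the paper does not treat it as an obstacle at all. It takes as part of the modeling assumption that every test is on a left prefix of the current contaminated set, so the alphabetic-tree correspondence is immediate. In the ONP setting of the paper this reading is the intended one (nested $+$ order-preserving means the next tested subset is a left segment of the current contaminated interval), so you need not prove that non-prefix tests can be dispensed with---it is built into the hypothesis.
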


\begin{proof}
Under the imposed constraints, the process of isolating a single defective unit can be modeled as an extended binary tree with the following structure:
\begin{itemize}
    \item Internal nodes represent tests applied to subsets formed from the leftmost portion of the remaining contaminated set.
    \item The level of each node (i.e., the number of edges from the root) reflects the number of tests conducted up to that stage.
    \item A left branch from an internal node denotes a positive test outcome (i.e., contamination is detected), and a right branch denotes a negative outcome (i.e., pure).
    \item Each terminal node is labeled by a single unit, representing the isolated defective.
\end{itemize}

An example of such a testing tree for a group of five units is shown in Figure~\ref{fig:testing_tree_example2}.

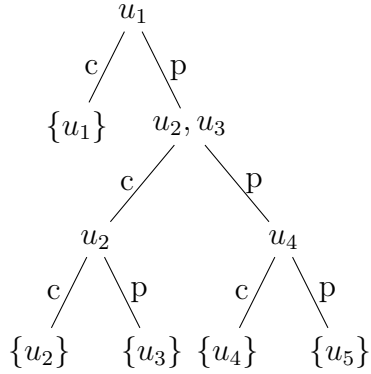
\begin{figure}[ht!]
    \centering
    \begin{tikzpicture}[inner sep=3pt, level distance=1.5cm,
        level 1/.style={sibling distance=1.5cm},
        level 2/.style={sibling distance=2.5cm},
        level 3/.style={sibling distance=1.5cm},
        level 4/.style={sibling distance=3cm},
        level 5/.style={sibling distance=1.5cm}]
        \node {$u_1$}
            child {node {$\{u_1\}$}
                edge from parent
                node[left] {c}
            }
            child {node {$u_2, u_3$}
                child {node {$u_2$}
                    child {node {$\{u_2\}$}
                        edge from parent node[left] {c}
                    }
                    child {node {$\{u_3\}$}
                        edge from parent node[right] {p}
                    }
                    edge from parent node[left] {c}
                }
                child {node {$u_4$}
                    child {node {$\{u_4\}$}
                        edge from parent node[left] {c}
                    }
                    child {node {$\{u_5\}$}
                        edge from parent node[right] {p}
                    }
                    edge from parent node[right] {p}
                }
                edge from parent node[right] {p}
            };
    \end{tikzpicture}
    \caption{Testing tree for the identification of the first defective unit in the contaminated 5-tuple
$(u_1, u_2, u_3, u_4, u_5)$.
Internal nodes (unbraced labels) represent tested subgroups, preserving the original unit order. Edge labels ‘c’ and ‘p’ correspond to contaminated and pure test outcomes, respectively.
Terminal nodes (labels in curly braces) indicate the first defective unit identified by the procedure. The process starts at the root and proceeds by testing subsets at each node, with the choice of the next test determined by the outcome of the previous one.
    }
    \label{fig:testing_tree_example2}
\end{figure}

Due to the ordered nature of the testing procedure, the unit identified by the algorithm is guaranteed to be the first defective encountered in the sequence \( u_1, u_2, \dots, u_n \). Consequently, the path from the root to the terminal node labeled \( \{u_i\} \) is traversed if and only if \( u_i \) is the first defective unit and all preceding units \( u_1, \dots, u_{i-1} \) are pure. Since the length of the path, denoted by \(\ell_i\), corresponds to the number of tests performed by the procedure until termination, the sum
\begin{equation*}
    \sum_{i=1}^n\ell_i \textsf{P}(u_i\text{ defective, }u_1,\dots,u_{i-1}\text{ pure})=
    \alpha^{-1}\sum_{i=1}^n\ell_ip_i\prod_{j=1}^{i-1}q_j
\end{equation*}
represents the total expected number of tests utilized by the procedure.
On the other hand, this sum is also equal to the cost of a binary tree with weights
\[
\alpha^{-1} p_1, \quad \alpha^{-1} p_2 q_1, \quad \dots, \quad \alpha^{-1} p_n q_1 q_2 \cdots q_{n-1}.
\]
Since each such tree is constrained to preserve the order of its terminal nodes—so that the \(i\)-th node (counting from left to right) is labeled \(\{u_i\}\)—it is an alphabetic tree. Therefore, to obtain the optimal procedure, one needs to construct an \emph{Optimal Alphabetic Tree (OAT)}.
\end{proof}

\begin{remark}
Note that the statement of the lemma applies to any ordering of the initial set, provided this initial order is preserved in the terminal nodes. \cite{GH1974} demonstrated that, in the unordered setting, the optimal procedure for isolating a single defective unit is given by the same OAT. However, one must first arrange the probabilities of defectiveness so that they form a non-increasing sequence \(p_1 \geq p_2 \geq \dots \geq p_n.\)
In our case, the order is fixed, which is the key difference.
\end{remark}

\begin{lem}\label{l:OAT_structure}
Consider the optimal procedure described in Lemma \ref{l:single_defective_proc_as_OAT}. Assume that
\begin{align}
\label{eq:con1}
1 - \frac{1}{\sqrt{2}} < p_1 \leq p_2 \leq \dots \leq p_n < \frac{3 - \sqrt{5}}{2}.
\end{align}
Then, for \( n \geq 3 \), in the first testing step, the procedure tests at most the first \( n - 2 \) units, and the penultimate unit is tested individually.
\end{lem}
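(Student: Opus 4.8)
The plan is to translate both assertions into structural statements about the optimal alphabetic tree (OAT) supplied by Lemma \ref{l:single_defective_proc_as_OAT} and to establish them by a local exchange (rotation) argument. Writing the root split at position $k$ (left subtree carrying the leaves $u_1,\dots,u_k$, right subtree carrying $u_{k+1},\dots,u_n$), the first test inspects exactly $\{u_1,\dots,u_k\}$, so the first assertion is $k\le n-2$, i.e.\ the root's right subtree contains both $u_{n-1}$ and $u_n$; the second assertion is that $u_{n-1}$ and $u_n$ are siblings, so that the node separating them tests $u_{n-1}$ alone. I would first record the two facts used repeatedly: every ordered (alphabetic) binary tree describes an admissible procedure, a tree rotation preserves the left-to-right leaf order and hence admissibility, and the cost is the weighted external path length $\sum_i w_i\ell_i$; consequently the OAT admits no cost-decreasing rotation.

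The single decisive input is the monotonicity of the weights. From $w_i=p_iq_1\cdots q_{i-1}/\alpha$ one has $w_i>w_n$ for every $i<n$ precisely when $p_i>p_nq_i\cdots q_{n-1}$, and under \eqref{eq:con1} this holds because
\[
p_nq_i\cdots q_{n-1}\le p_n q_i<\frac{3-\sqrt5}{2}\cdot\frac{1}{\sqrt2}=\frac{3-\sqrt5}{2\sqrt2}<1-\frac{1}{\sqrt2}<p_1\le p_i .
\]
Thus $w_n$ is the strict minimum of the weight sequence (in fact the sequence is strictly decreasing, by the same one-line estimate applied to $p_{i+1}q_i<p_i$). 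This is the only place where the precise endpoints $1-\tfrac{1}{\sqrt2}$ and $\tfrac{3-\sqrt5}{2}$ enter.

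Next I would compute the effect of the relevant rotation. Suppose a node $N$ has a left child that is itself internal, with subtrees $A$ (weight $a$) and $B$, and a right child which is the single leaf $u_n$ (weight $w_n$); this is the ``left-leaning'' local shape. Rotating $N$ yields the admissible tree with the same leaf order in which $A$ becomes a child of the new apex and $\{B,u_n\}$ its other subtree; a direct count of the local depths shows that this rotation lowers the cost by exactly $a-w_n$, so optimality forces $a\le w_n$. For the first assertion, assume $k=n-1$: then the root has precisely this shape with $A\ni u_1$, so $a\ge w_1>w_n$, a contradiction; this rules out $k=n-1$ and gives $k\le n-2$ (here $n\ge3$ guarantees that $u_1,\dots,u_{n-1}$ genuinely form a subtree). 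For the second assertion, note that $u_n$, being the rightmost leaf, must be a right child; let $T'$ be the left sibling subtree of its parent $P$. If $T'$ had at least two leaves it would split at its root into $A$ and $B$ with $A$ containing a leaf of index $<n$, whence $a>w_n$, again contradicting $a\le w_n$. Therefore $T'$ is the single leaf $u_{n-1}$, so $u_{n-1}$ and $u_n$ are siblings and $u_{n-1}$ is tested individually.

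The main obstacle is not the combinatorics but aligning the two quantitative ingredients: verifying the strict minimality of $w_n$ uniformly over the admissible window (the displayed estimate is decisive only because $\tfrac{3-\sqrt5}{2\sqrt2}<1-\tfrac{1}{\sqrt2}$, equivalently $5-\sqrt5<2\sqrt2$), and pinning down the sign of the rotation cost change so that the forbidden configurations are genuinely improvable. Once these are in place both conclusions are immediate; I would also remark that the two local optimality conditions invoked here are exactly the standard structural properties of OATs in \cite{HT1971, D1998}, so the rotation computation may be quoted rather than redone.
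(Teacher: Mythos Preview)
Your argument is correct and takes a genuinely different route from the paper's. The paper proves only the single inequality $w_n<w_{n-2}$ and then appeals to the mechanics of the Hu--Tucker algorithm: because $w_n<w_{n-2}$, the combination phase merges $w_{n-1}$ and $w_n$ before $w_{n-2}$ participates, so these two leaves receive equal levels and the reconstruction phase places them as siblings in the right subtree of the root. Your proof instead establishes the stronger (but no harder) fact that $w_n$ is the strict minimum of all the weights and then argues directly on the optimal tree via a single left rotation at the parent of $u_n$: if that parent's left child were internal with left subtree of total weight $a$, the rotation would change the cost by $w_n-a<0$, contradicting optimality. This is more elementary in that it bypasses the Hu--Tucker machinery entirely (no level-assignment or reconstruction phase is invoked), and it makes the role of the numerical window transparent through the clean estimate $p_nq_i<\tfrac{3-\sqrt5}{2\sqrt2}<1-\tfrac{1}{\sqrt2}<p_i$. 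The paper's approach, on the other hand, is closer to the OAT literature and makes explicit that the conclusion is really a statement about the Hu--Tucker merge order. One small observation: your second step (siblinghood of $u_{n-1}$ and $u_n$) already forces the first, since for $n\ge 3$ their common parent cannot be the root and therefore lies in the root's right subtree; the separate treatment of $k=n-1$ is thus redundant, though harmless.
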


\begin{proof}
Consider the last three terminal nodes of the OAT described in Lemma~\ref{l:OAT_structure}, which implement the optimal procedure. Their weights are given by
\[
w_{n-i} = \alpha^{-1} p_{n-i} q_1 \cdots q_{n-i-1}, \quad i = 0, 1, 2.
\]
We first show that
\begin{equation}
\label{eq:in1}
w_n < w_{n-2}.
\end{equation}

By simple rearrangement it follows that
\begin{align*}\label{e:condition_on_last_weights}
&
w_n<w_{n-2}\Longleftrightarrow
    q_{n-2}(1+q_{n-1}(1-q_n))<1.
\end{align*}

Using assumption~\eqref{eq:con1}, we immediately obtain
\begin{align*}
q_{n-2}\left(1 + q_{n-1}(1 - q_n)\right)
&< \frac{1}{\sqrt{2}} \left(1 + \frac{1}{\sqrt{2}}\left(1 - \left(1 - \frac{3 - \sqrt{5}}{2}\right)\right)\right)=0.89\cdots
\end{align*}
thus proving~\eqref{eq:in1}.

Now, let us consider the construction process of the OAT. Due to inequality~\eqref{eq:in1}, we have that, during the execution of the Hu--Tucker algorithm, the two nodes corresponding to weights \( w_n \) and \( w_{n-1} \) will be merged before \( w_{n-2} \) is involved in any merging operation. This is a key observation: it guarantees that there exists a step in the algorithm at which \( w_n \) and \( w_{n-1} \)---the two smallest and rightmost weights---are selected for merging.

As a consequence of this merging, both \( w_{n-1} \) and \( w_n \) will be assigned the same level \( \ell \geq 1 \) in the \emph{level assignment phase} of the Hu--Tucker algorithm. This level indicates their distance from the root in the final binary tree structure.

Since the nodes are ordered alphabetically (i.e., left to right from \( w_1 \) to \( w_n \)), the sequence of level assignments produced will be of the form
\[
\ell_1, \ell_2, \dots, \ell_{n-2}, \ell, \ell,
\]
where the last two equal entries \( \ell \) correspond to the merged weights \( w_{n-1} \) and \( w_n \). This level sequence is then used during the \emph{tree reconstruction phase}, in which the final structure of the OAT is obtained by recursively combining nodes according to their assigned levels, while preserving their left-to-right ordering.

Given that the two rightmost entries in the level sequence are equal, the reconstruction algorithm will combine the last two nodes into a common parent. This parent node will therefore appear as the right child of another internal node---specifically, it will be part of the \emph{right subtree of the root}. This follows from the recursive nature of reconstruction {(the rightmost sub-tree is formed from the highest-index elements in the sequence) and the fact that} OATs are always \emph{extended binary trees}\footnote{That is, every internal node, including the root, has exactly two children. {This guarantees that the root has the right subtree.}}. As a result, the \emph{initial test} (i.e., the root-level test in the associated decision process) must distinguish between units located in the left and right subtrees. Because \( w_{n-1} \) and \( w_n \) are located in the right subtree, the initial test must necessarily involve units corresponding to the left subtree---that is, among the units \( w_1, \dots, w_{n-2} \).

This completes the argument and establishes the claim.
\end{proof}

\begin{proof}[Proof of Theorem \ref{t:main}]
Recall that
\begin{equation} \label{eq:prob_bounds}
1 - \frac{1}{\sqrt{2}} < p_1 \leq p_2 \leq \dots \leq p_n < \frac{3 - \sqrt{5}}{2}.
\end{equation}

First, consider \( n = 2 \). By Lemma~\ref{l:GPTP_mean}, we have
\[
t_{1:2} = 2 - q_1 q_2 - q_1 < 1.
\]

Note that \( 2 - q_1 q_2 - q_1 \) is precisely the expected length of an optimal prefix code constructed via Huffman's algorithm \citep{H1952}. Thus, for \( n = 2 \), the group testing procedure coincides with the optimal (i.e., shortest expected length) Huffman code and is therefore optimal.

To finish the proof, we assume by induction that the {GPTA} is the unique optimal procedure for all \( n = 2, \dots, m-1 \), with \( m \geq 3 \). Now, consider the case \( n = m \). We divide the remainder of the proof into two steps:

\begin{itemize}
    \item First, we show that an optimal procedure cannot begin by testing more than two units.
    \item Second, we show that it cannot begin by testing a single unit.
\end{itemize}

Combining these two steps, we conclude that any procedure starting with a test on one unit or on three or more units, and then applying the {GPTA} to the remaining units (which is necessary since the {GPTA} is optimal on sets of size at most \( m-1 \) by the induction hypothesis), cannot be optimal.

Therefore, by induction, the {GPTA} is optimal for all \( n \geq 2 \). The structure of the argument closely follows the approach used by \cite{YH1990}.

\emph{Step 1}. Let $T$ be a procedure that begins by testing $k$ units, where $3 \leq k \leq m$. Define a competing procedure $T'$ as follows:
\begin{itemize}
    \item Initially, $T'$ tests the first $k-1$ units;
    \item If this group is pure, it then tests the $k$-th unit; after that, it proceeds on the remaining $n - k$ units exactly as $T$ does;
    \item If the group of the first $k-1$ units is contaminated, $T'$ deduces that the group of the first $k$ units is also contaminated and continues as $T$ does on the remaining units;
    \item In all cases, $T'$ leverages additional information unavailable to $T$ and optimizes by skipping redundant tests.
\end{itemize}
If the $(k-1)$-st unit is the first defective, then, knowing that the group of the first $k$ units is contaminated, procedure $T$ will apply the optimal nested procedure to identify the first defective. By Lemma~\ref{l:OAT_structure}, it will first classify the first $k-2$ units as pure, and then test the $(k-1)$-st unit individually. Upon receiving a positive result for this test, it will further test the $k$-th unit individually, based on the knowledge that the entire group of the first $k$ units is contaminated.

Thus, in total, $T$ will require the following tests:
\begin{itemize}
    \item one test on the entire group $1{:}k$;
    \item tests to classify the first $k-2$ units;
    \item two individual tests on the $(k-1)$-st and $k$-th units.
\end{itemize}

Procedure $T^\prime$ will skip the test on the $(k-1)$-st unit and will require at most the following tests:
\begin{itemize}
    \item one test on the group $1{:}(k-1)$;
    \item tests to classify the first $k-2$ units;
    \item one test on the $k$-th unit.
\end{itemize}
Thus, in the case where the $(k-1)$-st unit is the first defective, procedure $T'$ saves at least one test compared to $T$. A similar saving occurs if the $k$-th unit is the first defective:
\begin{itemize}
    \item $T$ requires one test on the group $1{:}k$, plus at least one test to classify the units $1{:}(k-2)$, and an individual test on the $(k-1)$-st unit;
    \item $T'$ requires one test on the group $1{:}(k-1)$, and one test on the $k$-th unit.
\end{itemize}
Procedure $T'$ will lose one test compared to $T$ only if the entire group $1{:}k$ is pure. Summing up, we have
\begin{multline*}
    \mathbb{E}[T_{\text{tests}}] - \mathbb{E}[T'_{\text{tests}}] \geq q_1 q_2 \cdots q_{k-2} p_{k-1} + q_1 q_2 \cdots q_{k-1} p_k - q_1 q_2 \cdots q_k = \\
    q_1 q_2 \cdots q_{k-2} \left( p_{k-1} + q_{k-1} p_k - q_{k-1} q_k \right) = \\
    q_1 q_2 \cdots q_{k-2} \left( 1 - 2 q_{k-1} q_k \right) > q_1 q_2 \cdots q_{k-2} \left( 1 - 2 \left( \frac{1}{\sqrt{2}} \right)^2 \right) = 0,
\end{multline*}
since for all $i$, $q_i < \frac{1}{\sqrt{2}}$.

Hence, the optimal procedure cannot start by testing more than two units.

\emph{Step 2}. In this step, we prove that optimal procedure can not start by testing single unit. Let $T$ be such that it first tests one unit and after that applies the {GPTA} on the remaining units (since the latter is optimal by inductive assumption). One needs to demonstrate that
\begin{equation*}
    E(T_{tests})=1+{t_{2:n}(q_2,\dots,q_{n})>
    t_{1:n}(q_1,\dots,q_{n})}.
\end{equation*}
The latter is equivalent to
\begin{equation*}
    1>{t_{1:n}(q_1,\dots,q_{n})-t_{2:n}(q_2,\dots,q_{n})}
\end{equation*}
and follows immediately from Lemma \ref{l:GPTP_mean} (ii).

\end{proof}

{}

\end{document}